\numberwithin{equation}{section}
\numberwithin{figure}{section}
\theoremstyle{plain}
\newtheorem{thm}{\protect\theoremname}[section]
\theoremstyle{plain}
\theoremstyle{definition}
\theoremstyle{plain}
\theoremstyle{plain}
\theoremstyle{plain}
\providecommand{\definitionname}{Definition}
\providecommand{\lemmaname}{Lemma}
\providecommand{\theoremname}{Theorem}
\providecommand{\corollaryname}{Corollary}
\providecommand{\remarkname}{Remark}
\providecommand{\propositionname}{Proposition}
\DeclareMathOperator{\loc}{loc}
\DeclareMathOperator{\cp}{cap}
\DeclareMathOperator{\ACL}{ACL}
\DeclareMathOperator{\BMO}{BMO}
\begin{document}

\title[Composition operators and weighted moduli inequalities]{Composition operators on Sobolev spaces and weighted moduli inequalities}

\author{V.~Gol'dshtein, E.~Sevost'yanov, A.~Ukhlov}
\begin{abstract}
In this paper we study connections between composition operators on Sobolev spaces and  mappings defined by $p$-moduli inequalities ($p$-capacity inequalities). We prove that weighted moduli inequalities lead to composition operators on corresponding Sobolev spaces and inverse, composition operators on Sobolev spaces imply weighted moduli inequalities.
\end{abstract}
\maketitle
\footnotetext{\textbf{Key words and phrases:} Sobolev spaces, Quasiconformal mappings}
\footnotetext{\textbf{2000
Mathematics Subject Classification:} 46E35, 30C65.}

\section{Introduction }

In this paper we study connections between composition operators on Sobolev spaces and  mappings 
defined by weighted $p$-moduli inequalities of curves families or corresponding weighted $p$-capacity inequalities. In the case $p=n$ the mappings defined by conformal moduli inequalities are quasiconformal mappings \cite{A66,V71} and the conformal modules method is one of the basic methods in the geometric theory of quasiconformal mappings \cite{V71}.

The topological mappings defined by $p$-capacity inequalities were firstly studied in \cite{Ger69}. In this work \cite{Ger69} the notion of $p$-distortion of mappings  was introduced and the Lipschitz property of such mappings was proved in the case $n-1<p<\infty$. The topological mappings defined by $(p,q)$-capacity inequalities were studied in \cite{U93} in connections with composition operators on Sobolev spaces. In \cite{U93,VU98} the weak differentiability of inverse mappings and its H\"older continuity was proved in the case $n-1<q<p<\infty$. Continuous mappings satisfying $(p,q)$-capacity inequalities were considered in \cite{UV10}. In this work \cite{UV10}  Liouville type theorems were proved and removability properties of singular sets were considered.  In the recent paper \cite{V21} (see also \cite{UV08,V20}) mappings which satisfy weighted $(p,q)$-capacity inequalities were considered in connection with problems of the geometric theory of composition operators on Sobolev spaces \cite{U93,VU02,VU04}.

Significant contributions to the theory of mappings defined by moduli inequalities belong to the Donetsk geometric mapping theory school. In particular, for mappings satisfying to weighted Poletsky's type inequalities for $p$-modulus of families of curves, their differentiability almost everywhere and the local integrability of partial derivatives were established \cite{SS11}. Some problems concerning the local behavior of such mappings and the problem of removability of an isolated singular
point were investigated in  \cite{GSS15,GSS16,Sev11}. In particular, mappings with a distortion of the
modulus of order $n-1<p<n $ have no essential singular points, that fundamentally distinguishes them from analytic functions and mappings with bounded distortion, see~\cite[Theorem~1.1]{GSS15}.
Let us note that some results concerning of weighted inequalities with respect to the $p$-modulus for such classes of mappings were obtained in~\cite{SS14}. It should be noted that the similar theory of mappings was developing independently in the context of directly weighted $p$-moduli, that can be found in works \cite{Cr16,Cr21}.

Composition operators on Sobolev spaces arise in the geometric analysis of Sobolev spaces \cite{GGu,GS82,GU} and are closely connected with the quasiconformal Reshetnyak problem \cite{VG75}. In series of works \cite{U93,V88,VU98,VU02,VU04} and \cite{GGR95} was founded the geometric theory of composition operators on Sobolev spaces. This theory has significant applications in the spectral theory of elliptic equations, see for example, \cite{GPU18_3,GU16,GU17}. 

The composition operators on Sobolev spaces are generated by weak $(p,q)$-quasiconformal mappings \cite{GGR95,U93,VU98} and allow characterization in the terms of capacity inequalities. Hence there is a connection between the geometric theory of composition operators on Sobolev spaces and the theory of mappings defined moduli inequalities. In \cite{MU21,MU21_2} were considered $Q$-homeomorphisms \cite{MRSY09} in connection with composition operators on Sobolev spaces. 

In the present work we give connection between mappings defined by weighted moduli inequalities and weak $(p,q)$-quasiconformal mappings (mappings generate composition operators on Sobolev spaces). Namely we prove the following statement:

Let a homeomorphism $\varphi: \Omega \to \widetilde{\Omega}$ satisfies to the moduli inequality
\begin{equation*}
M_p\left(\varphi \Gamma\right)\leqslant \int\limits_{\Omega} Q(x)\cdot
\rho^{p}(x)dx, \,\,n-1<p<\infty,
\end{equation*}
with a non-negative function $Q\in L_1(\Omega)$. Then $\varphi$ generates the bounded composition operator
$$
\varphi^{\ast}: L^1_{p'}(\widetilde\Omega) \to L^1_{1}(\Omega), \,\,p'=p/(p-n+1).
$$

The inverse assertion states:

Let a homeomorphism $\varphi: \Omega \to \widetilde{\Omega}$  generates the bounded composition operator
$$
\varphi^{\ast}: L^1_p(\widetilde\Omega) \to L^1_{n-1}(\Omega), \qquad
n-1< p< \infty\,.
$$
Suppose also that the mapping $\varphi$ satisfies Luzin's $N$-property. Then
\begin{equation*}
M_{p'}\left(\varphi \Gamma\right)\leqslant \int\limits_{\Omega} Q(x)\cdot
\rho^{p'}(x)dx, \,\,p^{\,\prime}=\frac{p}{p-n+1},
\end{equation*}
with a non-negative function $Q\in L_1(\Omega)$.

The suggested methods are based on the geometric theory of composition operators on Sobolev spaces and moduli inequalities.

\section{Sobolev spaces and composition operators}

\subsection{Sobolev spaces}
Let $\Omega$ be an open subset of $\mathbb R^n$. The Sobolev space
$W^1_p(\Omega)$, $1\leqslant p\leqslant\infty$, is defined \cite{M}
as a Banach space of locally integrable weakly differentiable
functions $f:\Omega\to\mathbb{R}$ equipped with the following norm:
\[
\|f\mid W^1_p(\Omega)\|=\| f\mid L_p(\Omega)\|+\|\nabla f\mid L_p(\Omega)\|,
\]
where $\nabla f$ is the weak gradient of the function $f$, i.~e. $ \nabla f = (\frac{\partial f}{\partial x_1},...,\frac{\partial f}{\partial x_n})$.

The homogeneous seminormed Sobolev space $L^1_p(\Omega)$,
$1\leqslant p\leqslant\infty$, is defined as a space of locally
integrable weakly differentiable functions $f:\Omega\to\mathbb{R}$
equipped with the following seminorm:
\[
\|f\mid L^1_p(\Omega)\|=\|\nabla f\mid L_p(\Omega)\|.
\]

In the Sobolev spaces theory, a crucial role belongs to capacities and associated with capacities an outer (capacitary) measure. This measure  that regulate some natural properties (for example convergence properties) of corresponding Sobolev spaces \cite{HKM,M}. In accordance to this approach, elements of Sobolev spaces $W^1_p(\Omega)$ are equivalence classes up to a set of $p$-capacity zero \cite{MH72}. 

Recall the definition of the capacity \cite{GResh,HKM,M}.
Suppose $\Omega$ is an open set in $\mathbb R^n$ and  $F\subset\Omega$ is a compact set. The $p$-capacity of $F$ with respect to $\Omega$ is defined by
\begin{equation*}
\cp_p(F;\Omega) =\inf\{\|\nabla f|L_p(\Omega)\|^p\},
\end{equation*} 
where the infimum is taken over all functions $f\in C_0(\Omega)\cap L^1_p(\Omega)$ such that $f\geq 1$ on $F$ and which are called admissible functions for the compact set $F\subset\Omega$. 
If 
$\subset\Omega$ 
is an open set, we define
\begin{equation*}
\cp_{p}(U;\Omega)=\sup\{\cp_{p}
(F;\Omega)\,:\,F\subset U,\,\, F\,\,\text{is compact}\}.
\end{equation*}

In the case of an arbitrary set 
$E\subset\Omega$
we define the inner $p$-capacity 
\begin{equation*}
\underline{\cp}_{p}(E;\Omega)=\sup\{\cp_{p}(F;\Omega)\, :\,\,F\subset E\subset\Omega,\,\, F\,\,\text{is compact}\},
\end{equation*}
and the outer $p$-capacity 
\begin{equation*}
\overline{\cp}_{p}(E;\Omega)=\inf\{\cp_{p}(U;\Omega)\, :\,\,E\subset U\subset\Omega,\,\, U\,\,\text{is open}\}.
\end{equation*}

A set $E\subset\Omega$ is called $p$-capacity measurable, if $\underline{\cp}_p(E;\Omega)=\overline{\cp}_p(E;\Omega)$. Let $E\subset\Omega$ be a $p$-capacity measurable set. The value
$$
\cp_p(E;\Omega)=\underline{\cp}_p(E;\Omega)=\overline{\cp}_p(E;\Omega)
$$
is called the $p$-capacity measure of the set $E\subset\Omega$.

The mapping $\varphi:\Omega\to\mathbb{R}^{n}$ belongs to the Sobolev space $W^1_{1,\loc}(\Omega,\mathbb R^n)$, if its coordinate functions belongs to $W^1_{1,\loc}(\Omega)$. In this case, the formal Jacobi matrix $D\varphi(x)$ and its determinant (Jacobian) $J(x,\varphi)$
are well defined at almost all points $x\in\Omega$. The norm $|D\varphi(x)|$ is the operator norm of $D\varphi(x)$,

Let us recall the change of variable formula in the Lebesgue integral \cite{F69, H93}.
Suppose a homeomorphism $\varphi : \Omega\to \mathbb R^n$ is such that
there exists a collection of closed sets $A_k\subset A_{k+1}\subset \Omega$, $k=1,2,...$, for which restrictions $\varphi \vert_{A_k}$ are Lipschitz mappings on the sets $A_k$ and
$$
\biggl|\Omega\setminus\sum\limits_{k=1}^{\infty}A_k\biggr|=0.
$$
Then there exists a measurable set $S\subset \Omega$, $|S|=0$, such that  the homeomorphism $\varphi:\Omega\setminus S \to \mathbb R^n$ has the Luzin $N$-property (the image of a set of measure zero has measure zero) and the change of variable formula
\begin{equation}
\label{chvf}
\int\limits_E f\circ\varphi (x) |J(x,\varphi)|~dx=\int\limits_{\mathbb R^n\setminus \varphi(S)} f(y)~dy
\end{equation}
holds for every measurable set $E\subset \Omega$ and every non-negative measurable function $f: \mathbb R^n\to\mathbb R$.

Note, that Sobolev homeomorphisms of the class $W^1_{1,\loc}(\Omega)$ satisfy to the conditions of the change of variable formula \cite{H93} and, therefore, for Sobolev homeomorphisms the change of variable formula \eqref{chvf} holds.

If the mapping $\varphi$ possesses the Luzin $N$-property, then
$|\varphi (S)|=0$ and the second integral can be rewritten as the
integral on $\mathbb R^n$. Note, that Sobolev homeomorphisms of the
class $L^1_p(\Omega)$, $p\geqslant n$, possess the Luzin
$N$-property \cite{VGR}.

\subsection{Composition operators and regularity of inverse mappings}

Let $\Omega$ and $\widetilde{\Omega}$ be domains in the Euclidean space $\mathbb R^n$. Then a homeomorphism $\varphi:\Omega\to\widetilde{\Omega}$ generates a bounded composition
operator
\[
\varphi^{\ast}:L^1_p(\widetilde{\Omega})\to
L^1_q(\Omega),\,\,\,1\leqslant q\leqslant p\leqslant\infty,
\]
by the composition rule $\varphi^{\ast}(f)=f\circ\varphi$, if for
any function $f\in L^1_p(\widetilde{\Omega})$, the composition $\varphi^{\ast}(f)\in L^1_q(\Omega)$
defined quasi-everywhere in $\Omega$ and there exists a constant $K_{p,q}(\varphi;\Omega)<\infty$ such that
\[
\|\varphi^{\ast}(f)\mid L^1_q(\Omega)\|\leqslant
K_{p,q}(\varphi;\Omega)\|f\mid L^1_p(\widetilde{\Omega})\|.
\]

Recall that the $p$-dilatation \cite{Ger69} of a Sobolev mapping $\varphi: \Omega\to \widetilde{\Omega}$ at a point $x\in\Omega$ defined as
$$
K_p(x)=\inf \{k(x): |D\varphi(x)|\leqslant k(x)
|J(x,\varphi)|^{\frac{1}{p}}\}.
$$

The following theorem gives a characterization of composition operators in terms of integral characteristics of mappings of finite distortion. Recall that a weakly differentiable mapping $\varphi:\Omega\to\mathbb{R}^{n}$ is the mapping of finite distortion if $D\varphi(x)=0$ for almost all $x$ from $Z=\{x\in\Omega: J(x,\varphi)=0\}$ \cite{VGR}.

\begin{thm}
\label{CompTh} A homeomorphism $\varphi:\Omega\to\widetilde{\Omega}$
between two domains $\Omega$ and $\widetilde{\Omega}$ generates a bounded composition
operator
\[
\varphi^{\ast}:L^1_p(\widetilde{\Omega})\to
L^1_{q}(\Omega),\,\,\,1\leqslant q\leqslant p\leqslant\infty,
\]
 if and only if $\varphi\in W^1_{q,\loc}(\Omega)$, has finite distortion,
and
\[
K_{p,q}(\varphi;\Omega) := \|K_p \mid L_{\kappa}(\Omega)\|<\infty, \,\,1/q-1/p=1/{\kappa}\,\,(\kappa=\infty, if p=q).
\]
The norm of the operator $\varphi^\ast$ is estimated as
$\|\varphi^\ast\| \leqslant K_{p,q}(\varphi;\Omega)$.
\end{thm}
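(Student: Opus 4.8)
The plan is to prove the two directions of Theorem~\ref{CompTh} separately; throughout write $\kappa$ for the exponent with $1/q-1/p=1/\kappa$, i.e.\ $\kappa=pq/(p-q)$ (and $\kappa=\infty$ when $p=q$).

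\textbf{Sufficiency.} Suppose $\varphi\in W^1_{q,\loc}(\Omega)$ is a homeomorphism of finite distortion with $\|K_p\mid L_\kappa(\Omega)\|<\infty$. Since $\varphi\in W^1_{1,\loc}(\Omega)$, the change of variable formula \eqref{chvf} applies to $\varphi$. First I would take $f\in L^1_p(\widetilde\Omega)\cap C^\infty(\widetilde\Omega)$. The finite distortion hypothesis guarantees $f\circ\varphi\in W^1_{q,\loc}(\Omega)$ with $\nabla(f\circ\varphi)=0$ a.e.\ on $Z=\{J(\cdot,\varphi)=0\}$ and
\begin{equation*}
|\nabla(f\circ\varphi)(x)|\leqslant|\nabla f(\varphi(x))|\,|D\varphi(x)|\leqslant |\nabla f(\varphi(x))|\,K_p(x)\,|J(x,\varphi)|^{1/p}\qquad\text{a.e. in }\Omega .
\end{equation*}
Raising to the power $q$, integrating over $\Omega$, using the H\"older inequality with exponents $p/q$ and $\kappa/q$ (so that $q/p+q/\kappa=1$), and then invoking \eqref{chvf}, one gets
\begin{equation*}
\int\limits_\Omega|\nabla(f\circ\varphi)|^q\,dx\leqslant\left(\,\int\limits_\Omega|\nabla f(\varphi(x))|^p|J(x,\varphi)|\,dx\right)^{q/p}\left(\,\int\limits_\Omega K_p(x)^{\kappa}\,dx\right)^{q/\kappa}\leqslant\|K_p\mid L_\kappa(\Omega)\|^q\,\|\nabla f\mid L_p(\widetilde\Omega)\|^q .
\end{equation*}
Passing to minimizers, this already yields the capacitary estimate $\cp_q(\varphi^{-1}(F);\varphi^{-1}(U))\leqslant K_{p,q}(\varphi;\Omega)^{q}\,\cp_p(F;U)$ for condensers $(F,U)$ in $\widetilde\Omega$; in particular $\varphi^{-1}$ carries sets of $p$-capacity zero to sets of $q$-capacity zero, so $\varphi^\ast(f)=f\circ\varphi$ is defined quasi-everywhere for arbitrary $f\in L^1_p(\widetilde\Omega)$ and the inequality above extends to all such $f$ by approximation, proving boundedness of $\varphi^\ast$ with $\|\varphi^\ast\|\leqslant K_{p,q}(\varphi;\Omega)$. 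For $p=q$ ($\kappa=\infty$) the same computation works with $\|K_p\mid L_\infty(\Omega)\|$ in place of the last factor.

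\textbf{Necessity.} Assume $\varphi^\ast\colon L^1_p(\widetilde\Omega)\to L^1_q(\Omega)$ is bounded with norm $K$. The first step is Sobolev regularity of $\varphi$: applying $\varphi^\ast$ to the functions $y\mapsto\eta(y)\,y_i$, where $\eta\in C^\infty_0(\widetilde\Omega)$ equals $1$ on a ball $B=B(y_0,r)$ with $\overline{B(y_0,2r)}\subset\widetilde\Omega$, shows $\varphi\in W^1_{q,\loc}(\varphi^{-1}(B),\mathbb R^n)$; covering $\widetilde\Omega$ by countably many such balls gives $\varphi\in W^1_{q,\loc}(\Omega,\mathbb R^n)$, so in particular \eqref{chvf} is available for $\varphi$. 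Next, for $p>q$ I would introduce, for open sets $A$ with $\overline A\subset\widetilde\Omega$, the set function
\begin{equation*}
\Phi(A)=\sup\left\{\left(\frac{\|\nabla(f\circ\varphi)\mid L_q(\varphi^{-1}(A))\|}{\|\nabla f\mid L_p(\widetilde\Omega)\|}\right)^{\kappa}:\ f\in L^1_p(\widetilde\Omega),\ \nabla f=0\text{ a.e.\ outside }A,\ \|\nabla f\mid L_p(\widetilde\Omega)\|\neq0\right\}.
\end{equation*}
One checks that $\Phi$ is monotone, bounded by $K^\kappa$, and --- the technical core of the argument --- that it is finitely and then countably additive, hence extends to a Radon measure on $\widetilde\Omega$: additivity comes from gluing near-extremal test functions with disjoint gradient supports, the exponent $\kappa$ being precisely the one for which the elementary optimization $\sup_{t_1,t_2>0}\big(b_1t_1^q+b_2t_2^q\big)\big/\big(a_1t_1^p+a_2t_2^p\big)^{q/p}$ is additive in the data $(a_i,b_i)$.

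\textbf{Necessity, conclusion.} Finally I would extract pointwise information. Applying the Lebesgue differentiation theorem to $\Phi$, testing it on the condensers $(\overline{B(y_0,r)},B(y_0,2r))$ with near-extremal affine test functions whose gradient is aligned with a top singular direction of $D\varphi$, letting $r\to0$, and combining with \eqref{chvf} and the a.e.\ approximate differentiability of $\varphi$, one obtains for a.e.\ $x\in\Omega$ the pointwise bound
\begin{equation*}
K_p(x)^{\kappa}\leqslant\Phi'(\varphi(x))\,|J(x,\varphi)| ,
\end{equation*}
$\Phi'$ being the density of the absolutely continuous part of $\Phi$; in particular $D\varphi(x)=0$ a.e.\ on $Z$, so $\varphi$ has finite distortion, and integrating over $\Omega$ with \eqref{chvf} yields
\begin{equation*}
\|K_p\mid L_\kappa(\Omega)\|^{\kappa}=\int\limits_\Omega K_p(x)^{\kappa}\,dx\leqslant\int\limits_{\widetilde\Omega}\Phi'(y)\,dy\leqslant\Phi(\widetilde\Omega)\leqslant K^{\kappa} .
\end{equation*}
For $p=q$ one argues more directly: pulling back admissible functions gives $\cp_p(\varphi^{-1}(F);\varphi^{-1}(U))\leqslant K^p\,\cp_p(F;U)$, and applying this to spherical ring condensers together with the known asymptotics of their $p$-capacity forces $|D\varphi(x)|\leqslant K\,|J(x,\varphi)|^{1/p}$ a.e., i.e.\ $\|K_p\mid L_\infty(\Omega)\|\leqslant K$.

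\textbf{Main obstacle.} I expect the hard part to lie entirely in the necessity, in two places: proving genuine countable additivity of the set function $\Phi$ (without which the differentiation step is unavailable), and then turning the differentiated inequality into the pointwise dilatation bound while simultaneously establishing finite distortion. The latter is delicate because $\varphi$ is not assumed to have the Luzin $N$-property, so one must argue, using the homeomorphism property, that $\varphi^{-1}$ cannot create positive measure out of the degeneracy set $\{J=0\}$; and passing to the limit in the test-function estimate with the correct constant rather than merely up to a dimensional factor requires a careful choice of the near-extremal functions.
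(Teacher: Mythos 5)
The paper does not prove Theorem~\ref{CompTh}; it cites it to \cite{VG75}, \cite{V88}, \cite{U93}, \cite{VU98} and \cite{GU10}. Your sketch reproduces exactly the strategy of those sources: the chain rule plus H\"older's inequality with exponents $p/q$ and $\kappa/q$ and the change of variable formula \eqref{chvf} for sufficiency, and for necessity the quasi-additive set function $\Phi$ with exponent $\kappa$, its extension to a measure, Lebesgue differentiation, and the pointwise dilatation bound (with the ring-condenser capacity argument in the case $p=q$). The outline is correct and the points you flag as the technical core --- countable additivity of $\Phi$ and the passage from the differentiated inequality to the pointwise bound together with finite distortion --- are indeed where the real work lies in \cite{U93,VU98}.
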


This theorem in the case $p=q=n$ was proved in \cite{VG75}, the case $p=q>n$ was proved in \cite{V88} (see, also \cite{GGR95}). The general case $q\leq p<\infty$ was proved in \cite{U93} (see, also \cite{VU98}) and the limit case $p=\infty$ was considered in \cite{GU10}.

{\it Let us recall, that homeomorphisms $\varphi:\Omega\to\widetilde{\Omega}$ which satisfy conditions of Theorem~\ref{CompTh}  are called as weak $(p,q)$-quasiconformal mappings \cite{GGR95,VU98}. }

In the case of weak $(p,q)$-quasiconformal mappings, the following composition duality property was introduced in \cite{U93} (the detailed proof can be found in  \cite{GU19}):

\begin{thm}
\label{CompThD} Let a homeomorphism $\varphi:\Omega\to\widetilde{\Omega}$
between two domains $\Omega$ and $\widetilde{\Omega}$ generates a bounded composition
operator
\[
\varphi^{\ast}:L^1_p(\widetilde{\Omega})\to
L^1_{q}(\Omega),\,\,\,n-1<q \leqslant p< \infty,
\]
then the inverse mapping $\varphi^{-1}:\widetilde{\Omega}\to\Omega$ generates a bounded composition operator
\[
\left(\varphi^{-1}\right)^{\ast}:L^1_{q'}(\Omega)\to L^1_{p'}(\widetilde{\Omega}),
\]
where $p'=p/(p-n+1)$, $q'=q/(q-n+1)$. In the case $n=2$ this theorem is correct for $1\leqslant q\leqslant p<\infty$, the inverse assertion is also correct and $p''=p$.
\end{thm}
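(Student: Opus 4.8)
The plan is to reduce both the hypothesis and the desired conclusion to the analytic characterization of Theorem~\ref{CompTh}, and to pass from $\varphi$ to $\psi:=\varphi^{-1}$ by a pointwise linear-algebra estimate followed by the change of variables formula~\eqref{chvf}. By Theorem~\ref{CompTh} the hypothesis means $\varphi\in W^1_{q,\loc}(\Omega)$ has finite distortion and $\|K_p\mid L_\kappa(\Omega)\|<\infty$ with $1/q-1/p=1/\kappa$. Applying Theorem~\ref{CompTh} to $\psi:\widetilde\Omega\to\Omega$, the conclusion is equivalent to the three assertions: $\psi\in W^1_{p',\loc}(\widetilde\Omega)$, $\psi$ has finite distortion, and $\|K_{q'}(\cdot,\psi)\mid L_{\kappa'}(\widetilde\Omega)\|<\infty$, where $K_{q'}(y,\psi)=|D\psi(y)|/|J(y,\psi)|^{1/q'}$ and $1/\kappa'=1/p'-1/q'$. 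The exponent bookkeeping that makes the scheme close is the identity $1/p'-1/q'=(n-1)(1/q-1/p)=(n-1)/\kappa$, i.e. $\kappa'=\kappa/(n-1)$.

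First I would secure the regularity of the inverse and the almost-everywhere differential identities. Because $q>n-1$, the homeomorphism $\varphi$ is differentiable almost everywhere, and by the inverse-regularity theory for weak $(p,q)$-quasiconformal mappings the inverse is weakly differentiable, $\psi\in W^1_{p',\loc}(\widetilde\Omega)$ \cite{U93,VU98}. On the full-measure set $\Omega_0=\{x:J(x,\varphi)>0\}$ (its complement carries $D\varphi=0$ by finite distortion) the identities
\[
D\psi(\varphi(x))=\bigl(D\varphi(x)\bigr)^{-1},\qquad J(\varphi(x),\psi)=J(x,\varphi)^{-1}
\]
hold for almost every $x$. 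I expect this step to be the \emph{main obstacle}: it is precisely here that $q>n-1$ is indispensable, since below this threshold the inverse of a Sobolev homeomorphism need not be weakly differentiable, and the whole duality collapses.

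The next step is a purely algebraic dilatation estimate. For an invertible matrix $A=D\varphi(x)$ we have $A^{-1}=(\det A)^{-1}\adj A$, and in terms of the singular values $\sigma_1\ge\dots\ge\sigma_n>0$ one has $|\adj A|=\sigma_1\cdots\sigma_{n-1}\le\sigma_1^{\,n-1}=|A|^{n-1}$. Hence
\[
|D\psi(\varphi(x))|=\frac{|\adj D\varphi(x)|}{|J(x,\varphi)|}\le\frac{|D\varphi(x)|^{n-1}}{|J(x,\varphi)|}.
\]
Substituting this together with $J(\varphi(x),\psi)=J(x,\varphi)^{-1}$ into the definition of $K_{q'}$ and using $1-1/q'=(n-1)/q$ gives the clean pointwise bound $K_{q'}(\varphi(x),\psi)\le K_q(x)^{\,n-1}$, where $K_q(x)=|D\varphi(x)|/|J(x,\varphi)|^{1/q}$.

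Finally I would integrate and change variables. Raising the last inequality to the power $\kappa'=\kappa/(n-1)$ and separating a single Jacobian factor gives $K_{q'}(y,\psi)^{\kappa'}\le h(\psi(y))\,|J(y,\psi)|$ almost everywhere on $\widetilde\Omega$, with $h(x)=|D\varphi(x)|^{\kappa}|J(x,\varphi)|^{1-\kappa/q}$. Applying~\eqref{chvf} to the Sobolev homeomorphism $\psi$, with exceptional null set $\widetilde S$ and non-negative weight $h$, yields
\[
\int\limits_{\widetilde\Omega}K_{q'}(y,\psi)^{\kappa'}\,dy\le\int\limits_{\widetilde\Omega}h(\psi(y))\,|J(y,\psi)|\,dy=\int\limits_{\Omega\setminus\psi(\widetilde S)}h(x)\,dx\le\int\limits_{\Omega}h(x)\,dx.
\]
Only this one inequality direction is needed, so no Luzin $N$-property is required: discarding $\psi(\widetilde S)$ can only decrease the integral. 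A short exponent computation using $1/q-1/p=1/\kappa$ shows $h=K_p^{\kappa}$ pointwise, so $\|K_{q'}(\cdot,\psi)\mid L_{\kappa'}(\widetilde\Omega)\|\le\|K_p\mid L_\kappa(\Omega)\|^{\,n-1}<\infty$. Finite distortion of $\psi$ holds because $\{y:J(y,\psi)=0\}$ is null: up to exceptional sets it is the $\varphi$-image of $\{x:J(x,\varphi)=0\}$, whose image is null by~\eqref{chvf}. The sufficiency direction of Theorem~\ref{CompTh} then delivers the bounded operator $(\varphi^{-1})^{\ast}:L^1_{q'}(\Omega)\to L^1_{p'}(\widetilde\Omega)$ with norm at most $\|K_p\mid L_\kappa(\Omega)\|^{\,n-1}$. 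When $n=2$ the inequality $|\adj A|\le|A|$ becomes an equality, the map $p\mapsto p'$ is an involution giving $p''=p$, and the special regularity of planar Sobolev homeomorphisms lets the argument reach down to $q=1$.
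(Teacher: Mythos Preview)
The paper does not actually prove Theorem~\ref{CompThD}; it is stated as a known result, introduced in \cite{U93} with a detailed proof deferred to \cite{GU19}. So there is no in-paper proof to compare against. Your argument is essentially the standard one that appears in those references (and is the scheme the paper \emph{does} carry out for the limit case in Theorem~\ref{CompThDLim}): characterize both hypothesis and conclusion via Theorem~\ref{CompTh}, use the adjugate estimate $|D\psi(\varphi(x))|\le|D\varphi(x)|^{n-1}/|J(x,\varphi)|$, and close with the change of variables formula. The exponent bookkeeping $\kappa'=\kappa/(n-1)$ and the identification $h=K_p^{\kappa}$ are correct.

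One point deserves care. You invoke \cite{U93,VU98} for the fact that $\psi=\varphi^{-1}\in W^1_{p',\loc}(\widetilde\Omega)$ and has finite distortion. Those are exactly the papers where the present theorem is proved, so as written this is circular. What you actually need is the \emph{separate} inverse-regularity statement: if $\varphi\in W^1_{q,\loc}$ is a homeomorphism of finite distortion with $q>n-1$, then $\varphi^{-1}\in W^1_{1,\loc}$ and has finite distortion. The paper itself cites \cite{GU10} for this in the proof of Theorem~\ref{CompThDLim}; you should do the same (or cite an equivalent source). Once $\psi\in W^1_{1,\loc}$ with finite distortion is secured externally, your integral estimate then upgrades the integrability to $\psi\in L^1_{p'}$ and yields $K_{q'}(\cdot,\psi)\in L_{\kappa'}$, after which Theorem~\ref{CompTh} gives the conclusion. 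Your remark that the Luzin $N$-property is not needed for the inequality direction of~\eqref{chvf} is correct and is the reason Theorem~\ref{CompThD} does not carry the extra hypothesis that Theorem~\ref{CompThDLim} requires.
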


The limit case of this theorem $p=\infty$ and $q=n-1$ was considered in \cite{GU10} in the frameworks of the weak inverse theorem for Sobolev spaces.

In the present work we consider the case $q=n-1<p<\infty$.

\begin{thm}
\label{CompThDLim} Let a homeomorphism $\varphi:\Omega\to\widetilde{\Omega}$
between two domains $\Omega$ and $\widetilde{\Omega}$ possesses the Luzin $N$-property and generates a bounded composition
operator
\[
\varphi^{\ast}:L^1_p(\widetilde{\Omega})\to
L^1_{n-1}(\Omega),\,\,\,n-1<p< \infty,
\]
then the inverse mapping $\varphi^{-1}:\widetilde{\Omega}\to\Omega$ generates a bounded composition operator
\[
\left(\varphi^{-1}\right)^{\ast}:L^1_{\infty}(\Omega)\to L^1_{p'}(\widetilde{\Omega}),
\]
where $p'=p/(p-n+1)$. In the case $n=2$ the inverse assertion is also correct and $p''=p$.
\end{thm}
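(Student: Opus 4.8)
The plan is to verify, for the inverse homeomorphism $\varphi^{-1}\colon\widetilde{\Omega}\to\Omega$, the hypotheses of Theorem~\ref{CompTh} with the exponent pair $(\infty,p')$: that $\varphi^{-1}\in W^1_{p',\loc}(\widetilde{\Omega},\mathbb R^n)$, that $\varphi^{-1}$ is a mapping of finite distortion, and that its $\infty$-dilatation $K_{\infty}(\cdot,\varphi^{-1})=|D\varphi^{-1}|$ (recall $|J|^{1/\infty}\equiv 1$) belongs to $L_{p'}(\widetilde{\Omega})$. For the pair $(\infty,p')$ the auxiliary exponent $\kappa$ in Theorem~\ref{CompTh} is $p'$ itself, so once these three facts are in place Theorem~\ref{CompTh} applied to $\varphi^{-1}$ gives at once the boundedness of $(\varphi^{-1})^{\ast}\colon L^1_{\infty}(\Omega)\to L^1_{p'}(\widetilde{\Omega})$.

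First I would invoke Theorem~\ref{CompTh} for $\varphi$ itself: boundedness of $\varphi^{\ast}\colon L^1_p(\widetilde{\Omega})\to L^1_{n-1}(\Omega)$ gives $\varphi\in W^1_{n-1,\loc}(\Omega,\mathbb R^n)$, finite distortion of $\varphi$, and $K_p\in L_{\kappa}(\Omega)$ with $\tfrac1{n-1}-\tfrac1p=\tfrac1{\kappa}$, i.e. $\kappa=\tfrac{p(n-1)}{p-n+1}=(n-1)p'$. The next step, which I expect to be the main obstacle, is to establish weak differentiability of the inverse: $\varphi^{-1}\in W^1_{1,\loc}(\widetilde{\Omega},\mathbb R^n)$ and $\varphi^{-1}$ of finite distortion. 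This is the borderline regularity of inverses of $W^{1,n-1}$-homeomorphisms of finite distortion, and it is exactly here that the limit exponent $q=n-1$ (as opposed to $q>n-1$, covered by Theorem~\ref{CompThD}) makes the statement delicate and where the standing hypothesis that $\varphi$ has Luzin's $N$-property enters; I would borrow it from the regularity theory of inverse Sobolev homeomorphisms.

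With weak differentiability of $\varphi^{-1}$ in hand, the heart of the argument is a pointwise inversion identity followed by a change of variables. Since $\varphi$ enjoys Luzin's $N$-property and satisfies~\eqref{chvf}, the set $Z=\{x:J(x,\varphi)=0\}$ together with the (null) non-differentiability set of $\varphi$ has image of Lebesgue measure zero in $\widetilde{\Omega}$; combined with a.e. differentiability of $\varphi^{-1}$ this gives, for a.e. $y\in\widetilde{\Omega}$ with $x=\varphi^{-1}(y)$,
\[
D\varphi^{-1}(y)=\bigl(D\varphi(x)\bigr)^{-1}=\frac{1}{J(x,\varphi)}\,\adj D\varphi(x),\qquad |D\varphi^{-1}(y)|\le\frac{|D\varphi(x)|^{\,n-1}}{|J(x,\varphi)|},
\]
the inequality being the standard bound $|\adj A|\le|A|^{n-1}$. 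Then I would compute $\int_{\widetilde{\Omega}}|D\varphi^{-1}(y)|^{p'}\,dy$, push it down to $\Omega\setminus Z$ via~\eqref{chvf} (legitimate because of Luzin's $N$-property of $\varphi$), and substitute $|D\varphi(x)|=K_p(x)\,|J(x,\varphi)|^{1/p}$; the resulting exponent of $|J(x,\varphi)|$ is $\tfrac{(n-1)p'}{p}-(p'-1)$, which vanishes precisely because $p'=\tfrac{p}{p-n+1}$. Hence the integral collapses to $\int_\Omega K_p(x)^{(n-1)p'}\,dx=\|K_p\mid L_{\kappa}(\Omega)\|^{\kappa}=K_{p,n-1}(\varphi;\Omega)^{\kappa}<\infty$, so $|D\varphi^{-1}|\in L_{p'}(\widetilde{\Omega})$; together with $\varphi^{-1}\in W^1_{1,\loc}$ this yields $\varphi^{-1}\in W^1_{p',\loc}(\widetilde{\Omega},\mathbb R^n)$, and Theorem~\ref{CompTh} applied to $\varphi^{-1}$ with the pair $(\infty,p')$ completes the forward implication with $\|(\varphi^{-1})^{\ast}\|\le K_{p,n-1}(\varphi;\Omega)^{\,n-1}$.

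For $n=2$ one has $n-1=1$, and for $2\times2$ matrices $|\adj A|=|A|$, so the inversion identity is symmetric in $\varphi$ and $\varphi^{-1}$ (and $p''=p$); running the same argument with the roles of the two mappings interchanged — under the corresponding Luzin $N$-hypothesis for $\varphi^{-1}$ — gives the converse assertion. To summarise, apart from Step~2 (the borderline Sobolev regularity and finite distortion of $\varphi^{-1}$), the proof reduces to the algebraic identity for $\adj D\varphi$, the arithmetic of the exponents $\kappa=(n-1)p'$, and a single application of the change of variables formula~\eqref{chvf}.
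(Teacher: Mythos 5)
Your proposal is correct and follows essentially the same route as the paper: obtain $\varphi\in W^1_{n-1,\loc}$ of finite distortion from Theorem~\ref{CompTh}, use the Luzin $N$-property to get $\varphi^{-1}\in W^1_{1,\loc}$ of finite distortion (the paper cites \cite{GU10} for exactly the borderline step you flag), apply the adjugate bound $|D\varphi^{-1}(y)|\le |D\varphi(x)|^{n-1}/|J(x,\varphi)|$, and change variables so that the exponents collapse to $\|K_p\mid L_{(n-1)p'}(\Omega)\|^{(n-1)p'}<\infty$. Your final appeal to Theorem~\ref{CompTh} with the pair $(\infty,p')$ is the same as the paper's closing citation of \cite{GU10}, which handles that limit case.
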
 

\begin{proof}
Since $\varphi:\Omega\to\widetilde{\Omega}$ generates a bounded composition
operator
\[
\varphi^{\ast}:L^1_p(\widetilde{\Omega})\to
L^1_{n-1}(\Omega),\,\,\,n-1<p< \infty,
\]
then by \cite{U93} the mapping $\varphi\in W^1_{n-1,\loc}(\Omega)$ and has a finite distortion. Because $\varphi$ possesses the Luzin $N$-property, then the inverse mapping belongs to $W^1_{1,\loc}(\widetilde{\Omega})$ and is a mapping of finite distortion \cite{GU10}.
Hence 
\[
|D\varphi^{-1}(y)|\leq\frac{|D\varphi(x)|^{n-1}}{|J(x,\varphi)|},
\]
for almost all $x\in \Omega\setminus \left(S\cup Z\right)$, $y=\varphi(x)\in \widetilde{\Omega}\setminus \varphi\left(S\cup Z\right)$, and  $|D\varphi^{-1}(y)|=0$ for almost all $y\in \varphi(S)$, where $Z=\{x\in\Omega: J(x,\varphi)=0\}$ and $S$ is the singular set in the change of variables formula (\ref{chvf}).
Because a measure of $S$ is zero and the mapping $\varphi$ has the Luzin $N$-property, then a measure of $\varphi(S)$ is also zero.

Therefore
\begin{multline*}
\int\limits_{\widetilde{\Omega}}|D\varphi^{-1}(y)|^{p'}~dy=\int\limits_{\widetilde{\Omega}\setminus \varphi\left(S\cup Z\right)}|D\varphi^{-1}(y)|^{p'}~dy\\
\leq \int\limits_{\widetilde{\Omega}\setminus \varphi\left(S\cup Z\right)}\left(\frac{|D\varphi(\varphi^{-1}(y))|^{n-1}}{|J(\varphi^{-1}(y),\varphi)|}\right)^{p'}~dy
=\int\limits_{\Omega\setminus \left(S\cup Z\right)}\left(\frac{|D\varphi(x)|^{n-1}}{|J(x,\varphi)|}\right)^{p'}|J(x,\varphi)|~dx\\
=
\int\limits_{\Omega}\left(\frac{|D\varphi(x)|^{p}}{|J(x,\varphi)|}\right)^{\frac{n-1}{p-(n-1)}}~dx<\infty,
\end{multline*}
by Theorem~\ref{CompTh}. 

Hence \cite{GU10} $\varphi^{-1}:\widetilde{\Omega}\to\Omega$ generates a bounded composition operator
\[
\left(\varphi^{-1}\right)^{\ast}:L^1_{\infty}(\Omega)\to L^1_{p'}(\widetilde{\Omega}),
\]
where $p'=p/(p-n+1)$.

\end{proof}

\section{Composition operators and moduli inequalities}

\subsection{Modulus and capacity}
Let $\Gamma$ be a family of curves in $\mathbb R^n$. Denote by $adm(\Gamma)$ the set of Borel functions (admissible functions)
$\rho: \mathbb R^n\to[0,\infty]$ such that the inequality
$$
\int\limits_{\gamma}\rho~ds\geqslant 1
$$
holds for locally rectifiable curves $\gamma\in\Gamma$.

Let $\Gamma$ be a family of curves in $\overline{\mathbb R^n}$, where $\overline{\mathbb R^n}$ is a one point compactification of the Euclidean space $\mathbb R^n$. The quantity
$$
M_p(\Gamma)=\inf\int\limits_{\mathbb R^n}\rho^{p}~dx
$$
is called the $p$-module of the family of curves $\Gamma$ \cite{MRSY09}. The infimum is taken over all admissible functions
$\rho\in adm(\Gamma)$.

Let $\Omega$ be a bounded domain in $\mathbb R^n$ and $F_0, F_1$ be disjoint non-empty compact sets in the
closure of $\Omega$. Let $M_p(\Gamma(F_0,F_1;\Omega))$ stand for the
moduli of a family of curves which connect $F_0$ and $F_1$ in $\Omega$. Then \cite{MRSY09}
\begin{equation}\label{eq2}
M_p(\Gamma(F_0,F_1;\Omega)) = \cp_{p}(F_0,F_1;\Omega)\,,
\end{equation}
where $\cp_{p}(F_0,F_1;\Omega)$ is a $p$-capacity of the condenser $(F_0,F_1;\Omega)$ \cite{M}.

Suppose that a homeomorphism  $\varphi: \Omega\to\widetilde{\Omega}$ between two domains $\Omega$ and $\widetilde{\Omega}$ satisfy to the moduli inequality
\begin{equation}
\label{ME}
M_p\left(\varphi \Gamma\right)\leqslant \int\limits_{\Omega} Q(x)\cdot
\rho^{p}(x)dx
\end{equation}
with a non-negative measurable function $Q$ for every family $\Gamma$ of rectifiable curves in $\Omega$ and every admissible function $\rho$ for $\Gamma$. Such homeomorphisms  called  $Q$-homeomorphisms. 

The next section is devoted to connections between $Q$-homeomorphisms and the composition operators in the case 
$Q \in L_1(\Omega)$.

\subsection{Composition operators and $Q$-homeomorphisms}

Firstly we define two dilatation functions for Sobolev mappings of finite distortion $\varphi: \Omega \to \widetilde\Omega$.

\noindent
The outer  $p$-dilatation is the following quantity
$$
K^O_p(x,\varphi)= 
\begin{cases}
\frac{|D\varphi(x)|^p}{|J(x,\varphi)|},& \,\, J(x,\varphi)\ne 0,\\
0,& \,\, J(x,\varphi)= 0.
\end{cases}
$$
The inner $p$-dilatation is the following quantity
$$
K^I_p(x,\varphi)=
\begin{cases}
\frac{|J(x,\varphi)|}{l(D\varphi(x))^p},& \,\, J(x,\varphi)\ne 0,\\
0,& \,\, J(x,\varphi)= 0,
\end{cases}
$$
where $l(D\varphi(x))=\min\limits_{|h|=1}|D\varphi(x)\cdot h|$ for almost all $x\in\Omega$.

\begin{thm}\label{mod-comp}
Let a homeomorphism $\varphi: \Omega \to \widetilde{\Omega}$ satisfies to the moduli inequality
\begin{equation}
\label{ME1}
M_p\left(\varphi \Gamma\right)\leqslant \int\limits_{\Omega} Q(x)\cdot
\rho^{p}(x)dx, \,\,n-1<p<\infty,
\end{equation}
with a non-negative function $Q\in L_1(\Omega)$. Then $\varphi$ generates the bounded composition operator
$$
\varphi^{\ast}: L^1_{p'}(\widetilde\Omega) \to L^1_{1}(\Omega), \,\,p'=p/(p-n+1).
$$
\end{thm}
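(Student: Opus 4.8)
The plan is to deduce the statement from the characterization of composition operators in Theorem~\ref{CompTh}, applied with the pair of exponents $(p',1)$ in place of $(p,q)$. For that one must verify three things about $\varphi$: that $\varphi\in W^1_{1,\loc}(\Omega)$, that $\varphi$ is a mapping of finite distortion, and that its $p'$-dilatation satisfies $K_{p'}(\cdot,\varphi)\in L_{\kappa}(\Omega)$ where $1/1-1/p'=1/\kappa$, i.e. $\kappa=p/(n-1)\in(1,\infty)$ (finiteness uses $p>n-1$, and $p'\geqslant 1$ holds because $n\geqslant 1$).

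The first step is to invoke the regularity theory of $Q$-homeomorphisms. Since $\varphi$ satisfies \eqref{ME1} for every family $\Gamma$ of curves in $\Omega$ and every $\rho\in adm(\Gamma)$, with $Q\in L_1(\Omega)\subset L_{1,\loc}(\Omega)$ and $n-1<p<\infty$, the results of the Donetsk school on weighted Poletsky-type moduli inequalities (\cite{SS11}; see also \cite{SS14}, and \cite{MRSY09} for the case $p=n$) yield that $\varphi\in W^1_{1,\loc}(\Omega)$, that $\varphi$ is differentiable almost everywhere, that $\varphi$ is a mapping of finite distortion, and --- this is the essential input --- that its inner $p$-dilatation is dominated by the weight:
\[
K^I_p(x,\varphi)=\frac{|J(x,\varphi)|}{l(D\varphi(x))^{p}}\leqslant Q(x)\qquad\text{for almost all }x\in\Omega.
\]
The last bound is the ``sharpness'' of the Poletsky inequality for $\varphi$: one tests \eqref{ME1} on the families of curves joining concentric spheres about a Lebesgue point of $Q$, estimates the $p$-modulus of the image family from below through $D\varphi$, optimizes over admissible densities, and lets the radii shrink.

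The second step is an elementary pointwise comparison between $K_{p'}(\cdot,\varphi)$ and $K^I_p(\cdot,\varphi)$. At a point $x$ of differentiability with $J(x,\varphi)\neq 0$ let $\sigma_1\geqslant\dots\geqslant\sigma_n>0$ be the singular values of $D\varphi(x)$, so that $|D\varphi(x)|=\sigma_1$, $|J(x,\varphi)|=\sigma_1\cdots\sigma_n$, $l(D\varphi(x))=\sigma_n$, and
\[
K_{p'}(x,\varphi)^{p/(n-1)}=\frac{\sigma_1^{\,p/(n-1)}}{(\sigma_1\cdots\sigma_n)^{(p-n+1)/(n-1)}}.
\]
A one-line simplification shows that the inequality $K_{p'}(x,\varphi)^{p/(n-1)}\leqslant K^I_p(x,\varphi)$ is equivalent to $\sigma_n^{\,n-2}\leqslant\sigma_2\cdots\sigma_{n-1}$, which is trivially true since the right-hand side is a product of $n-2$ factors each at least $\sigma_n$ (it is an identity when $n=2$). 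On the set $\{J(x,\varphi)=0\}$ both dilatations vanish, since finite distortion forces $D\varphi(x)=0$ there. Hence $K_{p'}(x,\varphi)^{p/(n-1)}\leqslant K^I_p(x,\varphi)\leqslant Q(x)$ for almost all $x\in\Omega$, and therefore
\[
\|K_{p'}(\cdot,\varphi)\mid L_{p/(n-1)}(\Omega)\|\leqslant\Big(\int_\Omega Q(x)\,dx\Big)^{(n-1)/p}<\infty.
\]
Finally, $\varphi\in W^1_{1,\loc}(\Omega)$ has finite distortion with $K_{p'}(\cdot,\varphi)\in L_{p/(n-1)}(\Omega)$, so Theorem~\ref{CompTh} (with $p$ there replaced by $p'$, $q$ there equal to $1$, and $\kappa=p/(n-1)$) shows that $\varphi$ generates a bounded composition operator $\varphi^{\ast}:L^1_{p'}(\widetilde\Omega)\to L^1_{1}(\Omega)$, with $\|\varphi^{\ast}\|\leqslant\|Q\mid L_1(\Omega)\|^{(n-1)/p}$. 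The main obstacle is the first step: extracting from the global moduli inequality simultaneously the first-order Sobolev regularity of $\varphi$ and the pointwise domination $K^I_p(x,\varphi)\leqslant Q(x)$; once that information is in hand, the remaining work is the algebraic matrix-norm inequality and an appeal to Theorem~\ref{CompTh}.
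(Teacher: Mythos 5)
Your proof is correct and follows essentially the same route as the paper: both deduce from \cite{SS11} a pointwise differential inequality for $\varphi$ together with the ACL/finite-distortion properties, and then apply Theorem~\ref{CompTh} with the exponent pair $(p',1)$ and $\kappa=p/(n-1)$. The only difference is that the paper quotes from \cite{SS11} the estimate $|D\varphi(x)|^p\leqslant C(n,p)\,|J(x,\varphi)|^{p-n+1}Q^{n-1}(x)$, which after a one-line rearrangement is already the required bound $K_{p'}(x,\varphi)^{p/(n-1)}\leqslant C'Q(x)$, so your singular-value comparison of $K_{p'}^{p/(n-1)}$ with $K^I_p$ (which is algebraically correct) is an avoidable detour --- just make sure the inner-dilatation form $K^I_p(x,\varphi)\leqslant Q(x)$ that you attribute to \cite{SS11} is actually the form stated there, since only the weaker outer-type inequality above is needed.
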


\begin{proof}
Because $\varphi$ satisfies the moduli inequality with $Q \in L_1(\Omega)$, then by \cite{SS11} the mapping
$\varphi\in\ACL(\Omega)$, has finite distortion and the inequality
\begin{equation}
\label{Q-dist} |D\varphi(x)|^p \leqslant C(n,p)
|J(x,\varphi)|^{p-n+1}Q^{n-1}(x)
\end{equation}
holds for almost all $x\in\Omega$. Hence
$$
\left(\frac{|D\varphi(x)|^{\frac{p}{p-n+1}}}{|J(x,\varphi)|}\right)^{\frac{p-n+1}{n-1}}\leqslant
Q(x)\,\,\text{for almost all}\,\,x\in\Omega.
$$

Since $Q \in L_1(\Omega)$, we have
$$
\int\limits_{\Omega}\left(\frac{|D\varphi(x)|^{\frac{p}{p-n+1}}}{|J(x,\varphi)|}\right)^{\frac{p-n+1}{n-1}}~dx\leqslant
\int\limits_{\Omega}Q(x)~dx<\infty.
$$
Then, by Theorem~\ref{CompTh} the mapping $\varphi$ generates the bounded composition operator $\varphi^{\ast}: L^1_{p'}(\widetilde\Omega) \to L^1_{q'}(\Omega)$, where $p'=p/(p-n+1)$ and the number $q'$ is defined by
$$
\frac{q'}{p'-q'}=\frac{p-n+1}{n-1}\,\, \Rightarrow  \,\,q'=1.
$$
\end{proof}

In \cite{GSS17} the integrability of Jacobains of open discrete mappings with controlled p-module was considered. We note, that as the consequence of Theorem~\ref{mod-comp} we obtain that homeomorphisms which satisfy the weighted $p$-moduli inequality (\ref{ME1}) posses  measure distortion properties as weak $(p',1)$ quasiconformal mappings \cite{VU98,VU02}. 

Now using the composition duality property in the case of planar domains $\Omega,\widetilde{\Omega}\subset\mathbb R^2$ we obtain:

\begin{thm}
Let a homeomorphism $\varphi: \Omega \to \widetilde{\Omega}$, $\Omega,\widetilde{\Omega}\subset\mathbb R^2$, satisfies the moduli inequality
\begin{equation*}
M_p\left(\varphi \Gamma\right)\leqslant \int\limits_{\Omega}
Q(x)\cdot \rho^{p}(x)dx, \,\,1\leqslant p<\infty,
\end{equation*}
with a non-negative function $Q\in L_1(\Omega)$. Then the inverse mapping $\varphi^{-1}:\widetilde{\Omega}\to \Omega$ generates the bounded composition operator
$$
\left(\varphi^{-1}\right)^{\ast}: L^1_{\infty}(\Omega) \to L^1_{p}(\widetilde\Omega).
$$
In particular, $\varphi^{-1}\in L^1_{p}(\widetilde{\Omega})$.
\end{thm}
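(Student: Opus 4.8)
The plan is to deduce the claim from Theorem~\ref{mod-comp} together with the composition duality of Theorem~\ref{CompThD} in the planar setting. Since here $n=2$, we have $n-1=1$, so for $1<p<\infty$ Theorem~\ref{mod-comp} applies directly: the moduli inequality with $Q\in L_1(\Omega)$ forces $\varphi$ to generate the bounded composition operator $\varphi^{\ast}\colon L^1_{p'}(\widetilde\Omega)\to L^1_{1}(\Omega)$ with $p'=p/(p-1)$. (In the notation of the proof of Theorem~\ref{mod-comp} the target exponent $q'$ solves $q'/(p'-q')=(p-n+1)/(n-1)$, which for $n=2$ reads $q'/(p'-q')=p-1$, hence $q'=1$.) So the first step is simply to quote Theorem~\ref{mod-comp}.

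Next I would dualize. The operator just produced has target exponent $q=1=n-1$, the borderline case of Theorem~\ref{CompThD}; but in the plane that theorem holds on the whole range $1\leqslant q\leqslant p<\infty$, with $q'=q/(q-n+1)$ read as $\infty$ when $q=n-1$. Applying it to $\varphi^{\ast}\colon L^1_{p'}(\widetilde\Omega)\to L^1_{1}(\Omega)$ — with source exponent $p'$ and target exponent $1$, both admissible since $p>1$ gives $1\leqslant 1\leqslant p'<\infty$ — we conclude that $\varphi^{-1}\colon\widetilde\Omega\to\Omega$ generates a bounded composition operator $(\varphi^{-1})^{\ast}\colon L^1_{\infty}(\Omega)\to L^1_{(p')'}(\widetilde\Omega)$, where the exponent dual (in the sense of Theorem~\ref{CompThD}) to the source exponent $p'$ is $(p')'=p'/(p'-1)=p$. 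This is exactly $(\varphi^{-1})^{\ast}\colon L^1_{\infty}(\Omega)\to L^1_{p}(\widetilde\Omega)$. Equivalently one may invoke Theorem~\ref{CompThDLim}, which is tailored to the target exponent $n-1$ and yields the same conclusion with $p''=p$ in the planar case; that route additionally requires knowing that $\varphi$ has the Luzin $N$-property, which is why going through Theorem~\ref{CompThD} seems more economical.

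The endpoint $p=1$, admitted by the statement but excluded by the hypothesis $p>n-1$ of Theorem~\ref{mod-comp}, I would treat directly: for $n=2$, $p=1$ the distortion inequality \eqref{Q-dist} from \cite{SS11} degenerates to $|D\varphi(x)|\leqslant C(2,1)\,Q(x)$ for almost all $x$, so $\varphi\in L^1_1(\Omega)$, and the change-of-variables estimate from the proof of Theorem~\ref{CompThDLim} (using the regularity of the inverse of a planar Sobolev homeomorphism of finite distortion) gives $\int_{\widetilde\Omega}|D\varphi^{-1}(y)|\,dy\leqslant C\int_{\Omega}Q(x)\,dx<\infty$, i.e.\ the bounded operator $(\varphi^{-1})^{\ast}\colon L^1_{\infty}(\Omega)\to L^1_{1}(\widetilde\Omega)$. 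Finally, the ``in particular'' clause follows by feeding the coordinate functions $x_1,x_2$ restricted to $\Omega$ into $(\varphi^{-1})^{\ast}$: each lies in $L^1_{\infty}(\Omega)$, its gradient being a constant unit vector, so boundedness of $(\varphi^{-1})^{\ast}\colon L^1_{\infty}(\Omega)\to L^1_{p}(\widetilde\Omega)$ puts the components $(\varphi^{-1})_i=x_i\circ\varphi^{-1}$ in $L^1_{p}(\widetilde\Omega)$, whence $D\varphi^{-1}\in L_p(\widetilde\Omega)$ and $\varphi^{-1}\in L^1_{p}(\widetilde\Omega)$.

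I expect the only genuine obstacle to be the careful handling of the borderline exponents: whether the planar composition duality legitimately reaches the target exponent $q=n-1=1$ — equivalently, whether a planar $Q$-homeomorphism with $Q\in L_1$ enjoys the Luzin $N$-property needed to run the inverse-function argument underlying Theorems~\ref{CompThD} and~\ref{CompThDLim} — together with the degenerate endpoint $p=1$, which is dispatched by the direct distortion estimate rather than by the duality machinery. Once the borderline duality is granted, the rest is bookkeeping with conjugate exponents.
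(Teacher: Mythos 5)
Your proposal is correct and is exactly the route the paper intends: the theorem is stated as an immediate consequence of Theorem~\ref{mod-comp} (giving $\varphi^{\ast}\colon L^1_{p'}(\widetilde\Omega)\to L^1_1(\Omega)$, $p'=p/(p-1)$) followed by the planar composition duality of Theorem~\ref{CompThD} with $q=1$, $q'=\infty$, $(p')'=p$, and the paper supplies no further argument. Your separate treatment of the endpoint $p=1$ (which falls outside the hypothesis $n-1<p$ of Theorem~\ref{mod-comp}) and your derivation of the ``in particular'' clause from the coordinate functions are both sound and in fact more careful than the paper itself.
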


Now we prove the inverse property.

\begin{thm}
Let a homeomorphism $\varphi: \Omega \to \widetilde{\Omega}$  generates the bounded composition operator
$$
\varphi^{\ast}: L^1_p(\widetilde\Omega) \to L^1_{n-1}(\Omega), \qquad
n-1< p< \infty\,.
$$
Suppose that the mapping $\varphi$ satisfies Luzin's $N$-property. Then
$\varphi$ is a $Q$-homeomorphism with respect to
$p^{\,\prime}$-modulus with $Q(x)=K^I_{p^{\,\prime}}(x,\varphi)\in L_1(\Omega),$
where $p^{\,\prime}=\frac{p}{p-n+1}.$
\end{thm}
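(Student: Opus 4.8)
The plan is to dualize. Since $\varphi^{\ast}\colon L^1_p(\widetilde\Omega)\to L^1_{n-1}(\Omega)$ is bounded, Theorem~\ref{CompTh} gives $\varphi\in W^1_{n-1,\loc}(\Omega)$, finite distortion, and $\|K_p\mid L_{(n-1)p'}(\Omega)\|<\infty$; combined with Luzin's $N$-property, Theorem~\ref{CompThDLim} produces the inverse composition operator $(\varphi^{-1})^{\ast}\colon L^1_{\infty}(\Omega)\to L^1_{p'}(\widetilde\Omega)$, and its proof in fact records the global bound $\int_{\widetilde\Omega}|D\varphi^{-1}(y)|^{p'}\,dy<\infty$ together with the fact that $\varphi^{-1}\in W^1_{p',\loc}(\widetilde\Omega)$ is a mapping of finite distortion; in particular $\varphi^{-1}$ is $\ACL^{p'}$. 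For a.e.\ $x\in\Omega$ outside the critical set $Z=\{J(\cdot,\varphi)=0\}$ one has the pointwise identity $|D\varphi^{-1}(\varphi(x))|=l(D\varphi(x))^{-1}$, hence $|D\varphi^{-1}(\varphi(x))|^{p'}\,|J(x,\varphi)|=K^I_{p'}(x,\varphi)$, while $D\varphi=0$ a.e.\ on $Z$, where $K^I_{p'}(\cdot,\varphi)$ is defined to be $0$.

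The integrability $Q:=K^I_{p'}(\cdot,\varphi)\in L_1(\Omega)$ then follows from one application of the change of variables formula (\ref{chvf}) for $\varphi$, taken with $f(y)=|D\varphi^{-1}(y)|^{p'}$ on $\widetilde\Omega$ and $f\equiv0$ elsewhere: the left-hand side equals $\int_\Omega K^I_{p'}(x,\varphi)\,dx$ by the identities above (the factor $|J(x,\varphi)|$ annihilating the contribution of $Z$), and the right-hand side equals $\int_{\widetilde\Omega}|D\varphi^{-1}(y)|^{p'}\,dy<\infty$ since $|\varphi(S)|=0$ by Luzin's $N$-property.

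For the weighted moduli inequality, fix a curve family $\Gamma$ in $\Omega$ and $\rho\in adm(\Gamma)$, where we may assume $\int_\Omega Q\rho^{p'}\,dx<\infty$, and set $\widetilde\rho=(\rho\circ\varphi^{-1})\,|D\varphi^{-1}|$ on $\widetilde\Omega$ (a Borel representative, zero elsewhere). The same use of (\ref{chvf}) gives $\int_{\widetilde\Omega}\widetilde\rho^{\,p'}\,dy=\int_\Omega\rho^{p'}(x)\,|D\varphi^{-1}(\varphi(x))|^{p'}|J(x,\varphi)|\,dx=\int_\Omega Q(x)\rho^{p'}(x)\,dx$. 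Because $\varphi^{-1}$ is $\ACL^{p'}$, Fuglede's theorem yields an exceptional subfamily $\Gamma_0\subset\varphi\Gamma$ with $M_{p'}(\Gamma_0)=0$ such that for every locally rectifiable $\widetilde\gamma\in\varphi\Gamma\setminus\Gamma_0$ the mapping $\varphi^{-1}$ is absolutely continuous along $\widetilde\gamma$ and $|(\varphi^{-1}\circ\widetilde\gamma)'(t)|\le|D\varphi^{-1}(\widetilde\gamma(t))|\,|\widetilde\gamma'(t)|$ for a.e.\ $t$; then $\gamma=\varphi^{-1}\circ\widetilde\gamma$ is a locally rectifiable curve of $\Gamma$ and, integrating in arc length, $\int_{\widetilde\gamma}\widetilde\rho\,ds\ge\int_\gamma\rho\,ds\ge1$. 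Hence $\widetilde\rho\in adm(\varphi\Gamma\setminus\Gamma_0)$, so $M_{p'}(\varphi\Gamma)=M_{p'}(\varphi\Gamma\setminus\Gamma_0)\le\int_{\widetilde\Omega}\widetilde\rho^{\,p'}\,dy=\int_\Omega Q(x)\rho^{p'}(x)\,dx$, which is the asserted $Q$-inequality with $Q=K^I_{p'}(\cdot,\varphi)\in L_1(\Omega)$.

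The main obstacle is this admissibility step. The delicate point is that for $p'<n$ the family of curves hitting a given Lebesgue-null set need not have vanishing $p'$-modulus, so the estimate $\int_{\widetilde\gamma}\widetilde\rho\,ds\ge\int_\gamma\rho\,ds$ cannot be obtained by naively restricting $D\varphi^{-1}$ to $\widetilde\gamma$; one has to go through Fuglede's theorem (approximating $D\varphi^{-1}$ in $L^{p'}$ while approximating $\varphi^{-1}$ uniformly) to obtain absolute continuity along $p'$-a.e.\ curve and the chain-rule bound on the metric speed outside a family of zero $p'$-modulus. Everything else reduces to the two bookkeeping-controlled applications of (\ref{chvf}).
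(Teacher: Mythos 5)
Your proposal is correct and follows essentially the same route as the paper's proof: integrability of $K^I_{p'}(\cdot,\varphi)$ via Theorem~\ref{CompThDLim} and the identity $|D\varphi^{-1}(\varphi(x))|=l(D\varphi(x))^{-1}$ plus the change of variables, then the moduli inequality via the test function $\widetilde\rho=(\rho\circ\varphi^{-1})|D\varphi^{-1}|$, Fuglede's theorem for admissibility outside an exceptional family of zero $p'$-modulus, and a final change of variables. The only cosmetic difference is that you apply the change-of-variable formula in the direction of $\varphi$ where the paper applies it to $\varphi^{-1}$; the substance is identical.
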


\begin{proof}
On the first step we prove that under conditions of the theorem the inner distortion function $K_{p^{\,\prime}}^I(x, \varphi)\in L_1(\Omega)$.
Since $\varphi$ generates the bounded composition operator
$$
\varphi^{\ast}: L^1_p(\widetilde\Omega) \to L^1_{n-1}(\Omega), \qquad
n-1< p< \infty\,.
$$
then by Theorem~\ref{CompThDLim} the inverse mapping $\varphi^{-1}:\widetilde\Omega\to\Omega$ generates a bounded composition operator
$$
\left(\varphi^{-1}\right)^{\ast} : L^1_{\infty}(\Omega)\to L^1_{p'}(\widetilde\Omega), \,\,p'=\frac{p}{p-n+1}
$$
and belongs to the Sobolev space $L^1_{p'}(\widetilde\Omega)$ \cite{GU10}.

Hence because the inverse mapping is a mapping of finite distortion, then by \cite{GU21}
\begin{multline*}
\int\limits_{\Omega}\frac{|J(x,\varphi)|}{l(D\varphi(x))^{p'}}~dx=
\int\limits_{\Omega\setminus Z}\frac{|J(x,\varphi)|}{l(D\varphi(x))^{p'}}~dx
=\int\limits_{\Omega\setminus Z} |D\varphi^{-1}(\varphi(x))|^{p'}|J(x,\varphi)|~dx\\
=\int\limits_{\widetilde{\Omega}}|D\varphi^{-1}(y)|^{p'}~dy<\infty.
\end{multline*}

Now by the definition of the $Q$-homeomorphism with respect to
$p^{\,\prime}$-modulus, we have to show, that for every family
$\Gamma$ of curves in $\Omega$ and every $\rho \in adm(\Gamma)$
$$
M_{p^{\,\prime}}(\varphi\Gamma) \leqslant \int\limits_\Omega
K^I_{p^{\,\prime}}(x,\varphi) \rho^{\,p^{\,\prime}}(x) \, dx.
$$

First, note, that by theorem \ref{CompTh}, $\varphi\in
W^1_{n-1,\loc}(\Omega)$. Also, $\varphi^{-1} \in
W^1_{p^{\,\prime},\loc}(\widetilde\Omega)$ by theorem \ref{CompThD}.
It implies, that $\varphi^{-1} \in
\ACL^{p^{\,\prime}}_{\loc}(\widetilde\Omega)$, is differentiable
a.e. (see \cite[Lemma~3]{Va65}).

By Fuglede's theorem (\cite{V71}, p.~95), if $\widetilde\Gamma$ is
the family of all curves $\gamma \in \varphi\Gamma$ for which
$\varphi^{-1}$ is absolutely continuous on all closed subcurves of
$\gamma$, then $M_{p^{\,\prime}}(\varphi\Gamma) =
M_{p^{\,\prime}}(\widetilde\Gamma)$. Then, for given $\rho \in
adm(\Gamma)$, one consider
$$
\widetilde\rho(y) =
\begin{cases}
\rho(\varphi^{-1}(y))|D\varphi^{-1}(y)|,& \,\, y \in \widetilde\Omega,\\
0,& \,\, \text{otherwise}.
\end{cases}
$$
Then, for $\widetilde\gamma \in \widetilde\Gamma$
$$
\int\limits_{\widetilde\gamma}\widetilde\rho \, ds \geqslant
\int\limits_{\varphi^{-1}\circ\widetilde\gamma} \rho \, ds \geqslant
1,
$$
and consequently $\widetilde\rho \in adm(\widetilde\Gamma)$.

We denote by $Z_0$ the set of all points $y\in\widetilde{\Omega},$
where $J_{\varphi^{-1}}(y)=0.$ By change of variable formula (see
\cite[Theorem~3.2.5]{F69}), we obtain that
\begin{multline*}
M_{p^{\,\prime}}(\varphi\Gamma) = M_{p^{\,\prime}}(\widetilde\Gamma)
\leqslant
\int\limits_{\widetilde\Omega}\widetilde{\rho}^{p^{\,\prime}} \, dy \\
=
\int\limits_{\widetilde\Omega}\rho^{p^{\,\prime}}(\varphi^{-1}(y))|D\varphi^{-1}(y)|^{p^{\,\prime}}
\, dy = \int\limits_{\widetilde\Omega\setminus
Z_0}\frac{\rho^{p^{\,\prime}}(\varphi^{-1}(y))}
{l(D\varphi(\varphi^{-1}(y)))^{p^{\,\prime}}} \, dy \\
= \int\limits_{\widetilde\Omega}
\rho^{p^{\,\prime}}(\varphi^{-1}(y))
K^I_{p^{\,\prime}}(\varphi^{-1}(y), \varphi) J_{\varphi^{-1}}(y) \,
dy \leqslant \int\limits_\Omega K_{p^{\,\prime}}^I(x, \varphi)
\rho^{p^{\,\prime}}(x) \, dx
\end{multline*}
which completes the proof.
\end{proof}

In the planar case $\Omega,\widetilde{\Omega}\subset\mathbb R^2$ we have the following theorem:

\begin{thm}\label{mod-comp-plane}
Let $\varphi: \Omega \to \widetilde{\Omega}$ be a homeomorphism  of domains $\Omega,\widetilde{\Omega}\subset\mathbb R^2$. Then $\varphi$ satisfies the moduli inequality
\begin{equation}
\label{ME2}
M_p\left(\varphi \Gamma\right)\leqslant \int\limits_{\Omega} Q(x)\cdot
\rho^{p}(x)dx, \,\,1<p<\infty,
\end{equation}
with a non-negative function $Q\in L_1(\Omega)$, if and only if $\varphi$ generates the bounded composition operator
$$
\varphi^{\ast}: L^1_{p'}(\widetilde\Omega) \to L^1_{1}(\Omega), \,\,p'=p/(p-1).
$$
\end{thm}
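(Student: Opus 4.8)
The plan is to prove the two implications separately. The implication ``the moduli inequality \eqref{ME2} implies the composition operator'' is already contained in Theorem~\ref{mod-comp}: for $n=2$ we have $n-1=1$, so the range $1<p<\infty$ is exactly $n-1<p<\infty$ and $p'=p/(p-1)=p/(p-n+1)$, and Theorem~\ref{mod-comp} yields the bounded operator $\varphi^{\ast}\colon L^1_{p'}(\widetilde\Omega)\to L^1_{1}(\Omega)$. So only the converse requires work, and for it I would follow the scheme of the $Q$-homeomorphism theorem proved just above.

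Assume $\varphi^{\ast}\colon L^1_{p'}(\widetilde\Omega)\to L^1_{1}(\Omega)$ is bounded. By Theorem~\ref{CompTh} the mapping $\varphi$ lies in $W^1_{1,\loc}(\Omega)$, is of finite distortion, and $\|K_{p'}\mid L_p(\Omega)\|<\infty$ (since $1/1-1/p'=1/p$), i.e.
$$
\int\limits_{\Omega}\left(\frac{|D\varphi(x)|^{p'}}{|J(x,\varphi)|}\right)^{\frac{p}{p'}}dx=\int\limits_{\Omega}\frac{|D\varphi(x)|^{p}}{|J(x,\varphi)|^{p-1}}\,dx<\infty.
$$
Here planarity is used in an essential way: since $D\varphi(x)$ is a $2\times2$ matrix, $|J(x,\varphi)|=|D\varphi(x)|\cdot l(D\varphi(x))$ at every point where $\varphi$ is differentiable, and $\varphi$ is differentiable almost everywhere because it is a $W^1_{1,\loc}$-homeomorphism of planar domains (Gehring--Lehto). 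Hence
$$
\frac{|D\varphi(x)|^{p}}{|J(x,\varphi)|^{p-1}}=\frac{|J(x,\varphi)|}{l(D\varphi(x))^{p}}=K^I_p(x,\varphi)\quad\text{a.e. in }\Omega,
$$
so that $Q:=K^I_p(\cdot,\varphi)\in L_1(\Omega)$; this is the function for which I will verify the $p$-moduli inequality.

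Next I would obtain $W^1_{p,\loc}$-regularity of the inverse. In the plane $\varphi^{-1}\in W^1_{1,\loc}(\widetilde\Omega)$ is automatically a mapping of finite distortion, differentiable almost everywhere, with $|D\varphi^{-1}(y)|=1/l(D\varphi(\varphi^{-1}(y)))$ on $\{J(\cdot,\varphi^{-1})\ne0\}$ and $D\varphi^{-1}(y)=0$ elsewhere; applying the change of variables formula \eqref{chvf} to $\varphi^{-1}$ and the identity $K^O_p(\varphi(x),\varphi^{-1})=K^I_p(x,\varphi)$ (again a $2\times2$ singular value computation) gives
$$
\int\limits_{\widetilde\Omega}|D\varphi^{-1}(y)|^{p}\,dy\leqslant\int\limits_{\Omega}K^I_p(x,\varphi)\,dx<\infty,
$$
no mass being lost because $D\varphi^{-1}$ already vanishes on the relevant null sets. (Equivalently, this is Theorem~\ref{CompThDLim}, whose proof does not actually need the $N$-property of $\varphi$ once $n=2$.) Thus $\varphi^{-1}\in W^1_{p,\loc}(\widetilde\Omega)$, hence $\varphi^{-1}\in\ACL^{p}_{\loc}(\widetilde\Omega)$, and one runs the Fuglede argument exactly as before: given $\Gamma$ in $\Omega$ and $\rho\in adm(\Gamma)$, passing to the subfamily $\widetilde\Gamma\subset\varphi\Gamma$ on which $\varphi^{-1}$ is absolutely continuous one has $M_p(\varphi\Gamma)=M_p(\widetilde\Gamma)$ by Fuglede's theorem; the function $\widetilde\rho(y)=\rho(\varphi^{-1}(y))|D\varphi^{-1}(y)|$ is admissible for $\widetilde\Gamma$, and writing $|D\varphi^{-1}(y)|^p=K^O_p(y,\varphi^{-1})|J(y,\varphi^{-1})|$ and using again \eqref{chvf} together with $K^O_p(\varphi(x),\varphi^{-1})=K^I_p(x,\varphi)$,
$$
M_p(\varphi\Gamma)=M_p(\widetilde\Gamma)\leqslant\int\limits_{\widetilde\Omega}\widetilde\rho^{\,p}\,dy=\int\limits_{\widetilde\Omega}\rho^{p}(\varphi^{-1}(y))|D\varphi^{-1}(y)|^{p}\,dy\leqslant\int\limits_{\Omega}K^I_p(x,\varphi)\,\rho^{p}(x)\,dx,
$$
which is the moduli inequality \eqref{ME2} with $Q=K^I_p(\cdot,\varphi)\in L_1(\Omega)$.

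I expect the main obstacle to be precisely the statement about the inverse mapping: checking that $\varphi^{-1}$ has the required $W^1_{p,\loc}$-regularity and that the change of variables above conserves mass, \emph{without} assuming Luzin's $N$-property for $\varphi$. This is where the restriction $n=2$ is genuinely used --- through the planar regularity theory of inverses of Sobolev homeomorphisms and the Gehring--Lehto almost-everywhere differentiability theorem --- and the bookkeeping of the measure-zero exceptional sets in \eqref{chvf} for both $\varphi$ and $\varphi^{-1}$ is the delicate point.
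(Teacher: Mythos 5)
Your proof is correct and is essentially the proof the paper intends (the paper states Theorem~\ref{mod-comp-plane} without a written proof; it is meant to follow by combining the two preceding theorems). The forward implication is exactly Theorem~\ref{mod-comp} at $n=2$, as you say, and your exponent bookkeeping in the converse is right: $\kappa=p$, $p/p'=p-1$, and the planar identity $|J(x,\varphi)|=|D\varphi(x)|\,l(D\varphi(x))$ turns $|D\varphi|^{p}/|J|^{p-1}$ into $K^I_p(x,\varphi)$, after which the Fuglede argument runs verbatim. The one place where you genuinely diverge from the paper's machinery is the delicate point you yourself flag: removing the Luzin $N$-property hypothesis from the converse. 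The paper's route is to invoke the planar case of Theorem~\ref{CompThD} (stated to hold for all $1\leqslant q\leqslant p<\infty$ with no $N$-property assumption), which directly gives $\left(\varphi^{-1}\right)^{\ast}:L^1_{\infty}(\Omega)\to L^1_{p}(\widetilde\Omega)$ and hence $\varphi^{-1}\in L^1_{p}(\widetilde\Omega)$; your route instead re-derives this by hand from the assertion that the inverse of a planar Sobolev homeomorphism of finite distortion is ``automatically'' a $W^1_{1,\loc}$ mapping of finite distortion with $D\varphi^{-1}=0$ a.e.\ on the exceptional sets. That assertion is true but is itself a nontrivial theorem (Hencl--Koskela regularity of planar inverses), not a routine $2\times2$ computation, and it is not among the paper's cited tools; you should either cite it explicitly or simply invoke Theorem~\ref{CompThD} as stated, which is what the restriction to $n=2$ is there for. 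With that reference supplied, the argument is complete.
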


\vskip 0.3cm

Vladimir Gol'dshtein; Department of Mathematics, Ben-Gurion University of the Negev, P.O.Box 653, Beer Sheva, 8410501, Israel 
 
\emph{E-mail address:} \email{vladimir@math.bgu.ac.il} \\     

Evgeny Sevost'yanov; Department of Mathematical Analysis, Zhytomyr Ivan Franko State University, 40 Bol'shaya Berdichevskaya Str., Zhytomyr, 10008, Ukraine      
 
\emph{E-mail address:} \email{esevostyanov2009@gmail.com} \\    

Alexander Ukhlov; Department of Mathematics, Ben-Gurion University of the Negev, P.O.Box 653, Beer Sheva, 8410501, Israel 
							
\emph{E-mail address:} \email{ukhlov@math.bgu.ac.il

\end{document}